\numberwithin{equation}{section}
\begin{document}

\title[Jensen functional inequality in groups]
{Ulam  stability of Jensen functional inequality in groups}

\author[G. Lu]{Gang Lu$^*$}
\address{Gang Lu \newline \indent Division of Foundational Teaching, Guangzhou College of Technology and Business, Guangzhou 510850, P.R. China}
\email{lvgang1234@163.com}\vskip 2mm

\author[L.Qu]{Lulu Qu}
\address{Lulu Qu\newline \indent Zhuhai city Polytecnic, Zhuhai, 519090,P.R. China}
\email{monaqu@126.com}\vskip 2mm
\author[Y. Jin]{Yuanfeng Jin$^*$}
\address{Yuanfeng Jin \newline \indent Department of Mathematics,  Yanbian University, Yanji 133001, P.R. China}
\email{yfkim@ybu.edu.cn}\vskip 2mm

\author[C. Park]{Choonkil Park}
\address{Choonkil Park\newline \indent  Research Institute for Natural Sciences,
Hanyang University, Seoul 04763,   Korea}
\email{baak@hanyang.ac.kr}\vskip 2mm

\begin{abstract}
In this paper we establish the Ulam stability of Jensen functional inequality in some classes of groups.
\end{abstract}

\subjclass[2010]{Primary 39B62, 39B52,  46B25}

\keywords{ Jensen functional inequality; Ulam  stability; group. \\ $^*$Corresponding authors: lvgang1234@163.com (G. Lu); yfkim@ybu.edu.cn (Y. Jin).}

\theoremstyle{definition}
  \newtheorem{df}{Definition}[section]
    \newtheorem{rk}[df]{Remark}
\theoremstyle{plain}
  \newtheorem{lemma}[df]{Lemma}
  \newtheorem{theorem}[df]{Theorem}
  \newtheorem{corollary}[df]{Corollary}
    \newtheorem{proposition}[df]{Proposition}
\newtheorem{example}[df]{Example}
\setcounter{section}{0}

\maketitle

\baselineskip=14pt

\numberwithin{equation}{section}

\vskip .2in

\section{Introduction and preliminaries}

Ulam \cite{Ul}  raised a stability problem for a homomorphism in metric groups.
In 1941,
 Hyers \cite{Hy}   proved that  if  $f$ is a mapping  from a normed vector space into a Banach space and satisfies $\|f(x+y)-f(x)-f(y)\|\leq \varepsilon$, then  there exists an additive function $A$ such that $\|f(x)-A(x)\|\leq \varepsilon$.
Hyers' theorem  was generalized by Aoki \cite{A} for additive
mappings and by Rassias \cite{R2} for linear mappings by
considering an unbounded Cauchy difference.  A
generalization of the  Rassias theorem was obtained by G\u
avruta \cite{Ga} by replacing the unbounded Cauchy difference by a
general control function in the spirit of Rassias' approach.
The stability problems for several functional equations or
inequalities have been extensively investigated by a number of
authors and there are many interesting results concerning this
problem (see \cite{as, CR,  CPS, CSY, Ch, c, jpr, ljr, LP, LP1, Park1, ppp, PCH, PLZ, wa}).

In this paper we investigate the stability of the Jensen functional inequality
\begin{eqnarray*}
\|f(xy)+f(xy^{-1})-2f(x)\|\leq \|\rho(f(xy)-f(x)-f(y))\|
\end{eqnarray*}
in some classes of noncommutative groups. The Jensen functional  equation
was studied in  \cite{AJN, CENS, N}.

\section{Main results}

Suppose that $X$ is an arbitrary group and $Y$ is an arbitrary real Banach space. In this section, assume that   $X$ is an arbitrary multiplicative group and $e$ is the identity element of $X$.

 \begin{df}
 We say that a function $f:X\rightarrow Y$ is a {\it $\rho$-Jensen function} if the function satisfies
 \begin{eqnarray}\label{eqn21}
 \|f(xy)+f(xy^{-1})-2f(x)\|\leq \|\rho(f(xy)-f(x)-f(y))\|.
 \end{eqnarray}for all $x,y\in X$.
 \end{df}

 We denote the set of all $\rho$-Jensen functions by $J{\rho}(X;Y)$.
 Denote by $J\rho_0(X;Y)$ the subset of $J\rho(X;Y)$ consisting of functions
 $f$ such that $f(e)=0$. Obviously, $J\rho_0(X;Y)$ is a subspace of $J\rho(X;Y)$
 and $J\rho(X;Y)=J\rho_0(X;Y)\oplus Y$.

 \begin{df}
 We say that a function $f:X\rightarrow Y$ is an $(X,Y)$-quasi $\rho$-Jensen function if there is $c>0$ such that
 \begin{eqnarray}\label{eqn2.2}
 \|f(xy)+f(xy^{-1})-2f(x)\|\leq \|\rho (f(xy)-f(x)-f(y))\|+c
 \end{eqnarray} for all $x,y\in X$.
 \end{df}

  It is clear that the set of $(X,Y)$-quasi $\rho$-Jensen functions is a real linear  space. Denote it by $KJ_{\rho}(X;Y)$.
 From (\ref{eqn2.2}), we have
\begin{eqnarray}
\|f(y)+f(y^{-1})-2f(e)\|\leq |\rho|\|f(e)\|+c.
\end{eqnarray}
Therefore
\begin{eqnarray}\label{eqn2.4}
\|f(y)+f(y^{-1})\|\leq c_1,
\end{eqnarray}
where $c_1=c+(2+|\rho|)\|f(e)\|$. Now letting $x=y$ in (\ref{eqn2.2}), we get
\begin{eqnarray}
\|f(x^2)+f(e)-2f(x)\|\leq |\rho|\|f(x^2)-2f(x)\|+c.
\end{eqnarray}
Hence
\begin{eqnarray}\label{eqn2.6}
\|f(x^2)-2f(x)\|\leq c_2 ,
\end{eqnarray}\label{eqn2.7}
where $c_2=\frac{c+\|f(e)\|}{1-|\rho|}$. Again letting $y=x^2$ in (\ref{eqn2.2}), we get
\begin{eqnarray}
\|f(x^3)+f(x^{-1}) -2f(x)\|\leq |\rho|\|f(x^3)-f(x^2)-f(x)\|+c.
\end{eqnarray}
By (\ref{eqn2.4}) and (\ref{eqn2.7}), we obtain
\begin{eqnarray}
\|f(x^3)-3f(x)\|\leq c_3,
\end{eqnarray}
where $c_3=\frac{|\rho|c_2+c_1+c}{1-|\rho|}$.

  Let $c$ be as in (\ref{eqn2.2}) and define the set $C$ as follow:
  $C=\{c_n|n\in \mathds{N}\}$, where $c_1=(|\rho|+2)\|f(e)\|+c$, $c_2=\frac{c+\|f(e)\|}{1-|\rho|}$, $c_3=\frac{|\rho|c_2+c_1+c}{1-|\rho|}$
  and $c_n=\frac{|\rho|c_{n-1}+c_{n-2}+c_1+c}{1-|\rho|}$ if $n>3$.
  \begin{lemma}
Let $f\in KJ_\rho (X;Y)$ such that
\begin{eqnarray*}
\|f(xy)+f(xy^{-1})-2f(x)\|\leq \|f(xy)-f(x)-f(y)\|+c.
\end{eqnarray*}
Then for any $x\in X$ and any $m\in \mathds{N}$ with $m ge 2$ the following relation holds:
\begin{eqnarray}\label{eqn29}
\|f(x^m)-mf(x)\|\leq c_m.
\end{eqnarray}
  \end{lemma}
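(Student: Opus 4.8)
The plan is to prove the bound $\|f(x^m)-mf(x)\|\le c_m$ by strong induction on $m$, using the recurrence that defines the constants $c_n$. The base cases $m=2$ and $m=3$ are exactly the estimates already derived in the text: inequality (2.6) gives $\|f(x^2)-2f(x)\|\le c_2$, and the subsequent computation gives $\|f(x^3)-3f(x)\|\le c_3$. So the real content is the inductive step for $m\ge 4$, and the whole proof rests on producing, for each $m$, an inequality relating $f(x^m)$ to lower powers in a way that mirrors the defining recurrence $c_m=\frac{|\rho|c_{m-1}+c_{m-2}+c_1+c}{1-|\rho|}$.

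The key mechanical step is the right substitution into the quasi-Jensen inequality (2.2). Following the pattern by which (2.7) was obtained (there one put $y=x^2$ to relate $f(x^3)$ to $f(x^{-1})$ and $f(x^2)$), I would substitute $y=x^{m-1}$ into (2.2) with the given coefficient-$1$ hypothesis, which yields
\begin{eqnarray*}
\|f(x^m)+f(x^{-(m-2)})-2f(x)\|\le |\rho|\,\|f(x^m)-f(x)-f(x^{m-1})\|+c.
\end{eqnarray*}
I would then control the term $f(x^{-(m-2)})$ by pairing it with $f(x^{m-2})$ through the bound (2.4), $\|f(y)+f(y^{-1})\|\le c_1$, applied to $y=x^{m-2}$, and I would replace $f(x^{m-2})$ and $f(x^{m-1})$ by their targets $(m-2)f(x)$ and $(m-1)f(x)$ using the induction hypotheses with constants $c_{m-2}$ and $c_{m-1}$. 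After moving the $|\rho|\|f(x^m)\|$ contribution to the left and collecting the additive error terms via the triangle inequality, the coefficient of $\|f(x^m)-mf(x)\|$ on the left becomes $(1-|\rho|)$, and dividing through reproduces exactly the recurrence for $c_m$.

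The main obstacle I anticipate is bookkeeping the error terms so that they aggregate to precisely $|\rho|c_{m-1}+c_{m-2}+c_1+c$ rather than something larger; in particular one must be careful that the Jensen-difference on the right, $f(x^m)-f(x)-f(x^{m-1})$, is split correctly so that its $f(x^m)$ part combines with the left side to give the factor $(1-|\rho|)$ while its remaining part $-f(x)-f(x^{m-1})$ merges cleanly with the $-mf(x)$ target using the $c_{m-1}$ estimate. A secondary point worth checking is the implicit hypothesis $|\rho|<1$, which is needed for the division by $1-|\rho|$ to be legitimate and for all the constants $c_n$ to be positive and well-defined; without it the recurrence is vacuous. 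Assuming $|\rho|<1$, the triangle-inequality estimates are routine and the induction closes.
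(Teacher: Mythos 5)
Your proposal is correct and follows essentially the same route as the paper: the paper also proceeds by induction (from $m$ and $m-1$ to $m+1$, i.e.\ strong induction with base cases $m=2,3$), substitutes $y=x^m$ into the quasi-Jensen inequality, pairs $f(x^{-(m-1)})$ with $f(x^{m-1})$ via the bound $\|f(y)+f(y^{-1})\|\le c_1$, splits the right-hand Jensen difference so the $f(x^{m+1})$ term moves to the left with coefficient $1-|\rho|$, and divides to recover exactly the recurrence defining $c_{m+1}$. Your index shift (proving the bound for $m$ directly via $y=x^{m-1}$) is immaterial, and your observation that $|\rho|<1$ is an implicit standing hypothesis is a fair point that the paper likewise leaves unstated.
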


  \begin{proof}
  The proof is by induction on $m$. For $m=2$, the lemma is established in (\ref{eqn2.6}). Suppose that for $m$ the lemma has been already established, let us verify it for $m+1$. Letting $y=x^m$ in (\ref{eqn2.2}), we have
  \begin{eqnarray}
  \|f(x^{m+1})+f(x^{-(m-1)})-2f(x)\|\leq |\rho|\|f(x^m)-f(x^{m-1})-f(x)\|+c.
  \end{eqnarray}
 Then
  \begin{eqnarray}
  \begin{split}
  \;&\|f(x^{m+1})-(m+1)f(x)\|-\|f(x^{-(m-1)})+f(x^{m-1})\|\\
  \;& -\|f(x^{m-1})-(m-1)f(x)\|\\
  \;& \leq |\rho|\|f(x^{m+1})-(m+1)f(x)\|+|\rho|\|f(x^m)-mf(x)\|+c
  \end{split}
    \end{eqnarray}
   and hence
   \begin{eqnarray}
   \|f(x^{m+1})-(m+1)f(x)\|\leq \frac{|\rho|c_m+c_{m-1}+c_1+c}{1-|\rho|}=c_{m+1}.
   \end{eqnarray}
   Now the lemma is proved.
  \end{proof}

\begin{lemma}
Let $f\in KJ_\rho (X;Y)$. For any $m>1$, $k\in \mathds{N}$ and $x\in X$ we have
\begin{eqnarray}\label{eqn213}
\|f(x^{m^k})-m^kf(x)\|\leq c_m(1+m+m^2+\cdots +m^{k-1})
\end{eqnarray}
and
\begin{eqnarray}\label{eqn214}
\left\|\frac{1}{m^k}f(x^{m^k})-f(x)\right\|\leq c_m.
\end{eqnarray}
\end{lemma}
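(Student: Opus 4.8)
The plan is to establish (\ref{eqn213}) by induction on $k$, using the previous lemma as the engine, and then to read off (\ref{eqn214}) by dividing through by $m^k$ and summing a geometric series. For the base case $k=1$, the geometric factor on the right of (\ref{eqn213}) collapses to $1$, and the inequality becomes exactly $\|f(x^m)-mf(x)\|\le c_m$, which is precisely (\ref{eqn29}). So nothing new is needed there.

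For the inductive step the key observation is the exponent identity $x^{m^{k+1}}=\left(x^{m^k}\right)^m$. Setting $z=x^{m^k}$ and applying (\ref{eqn29}) to the single element $z$ gives $\|f(z^m)-mf(z)\|\le c_m$, that is, $\|f(x^{m^{k+1}})-mf(x^{m^k})\|\le c_m$. A single triangle inequality, inserting and subtracting $mf(x^{m^k})$, then reduces the estimate for exponent $m^{k+1}$ to the (assumed) estimate for exponent $m^k$:
\begin{eqnarray*}
\|f(x^{m^{k+1}})-m^{k+1}f(x)\|
&\le& \|f(x^{m^{k+1}})-mf(x^{m^k})\|+m\|f(x^{m^k})-m^kf(x)\|\\
&\le& c_m+m\,c_m(1+m+\cdots+m^{k-1}).
\end{eqnarray*}
Since $m(1+m+\cdots+m^{k-1})=m+m^2+\cdots+m^k$, the right-hand side is $c_m(1+m+\cdots+m^k)$, which is (\ref{eqn213}) at level $k+1$. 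This completes the induction.

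To obtain (\ref{eqn214}) I would divide (\ref{eqn213}) by $m^k$ and rewrite the geometric factor as
\[
\frac{1+m+\cdots+m^{k-1}}{m^k}=\frac{1}{m}+\frac{1}{m^2}+\cdots+\frac{1}{m^k}=\frac{1-m^{-k}}{m-1}.
\]
Because $m>1$ with $m\in\mathds{N}$ forces $m\ge 2$, this quantity is bounded above by $\frac{1}{m-1}\le 1$, so the factor multiplying $c_m$ never exceeds $1$, and (\ref{eqn214}) follows at once.

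I do not expect a genuine obstacle in this argument: once the substitution $z=x^{m^k}$ is recognized, (\ref{eqn213}) is a routine two-term triangle-inequality induction, and (\ref{eqn214}) is the elementary fact that the tail $\sum_{j=1}^{k}m^{-j}$ of a geometric series stays below $1$ when $m\ge 2$. The only point deserving a moment's care is this last estimate, since it is exactly what makes the clean constant $c_m$ — rather than a $k$-dependent bound — appear on the right of (\ref{eqn214}).
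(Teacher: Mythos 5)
Your proof is correct and follows essentially the same route as the paper: induction on $k$ powered by the previous lemma's bound $\|f(x^m)-mf(x)\|\le c_m$ plus one triangle inequality, then division by $m^k$ and a geometric-series estimate to get (\ref{eqn214}). The only (immaterial) difference is that you peel off the outer $m$-th power by applying (\ref{eqn29}) to $x^{m^k}$, whereas the paper replaces $x$ by $x^m$ in the induction hypothesis and applies (\ref{eqn29}) to $x$ itself; both decompositions yield the same bound $c_m(1+m+\cdots+m^k)$.
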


\begin{proof}
The proof will be based on mathematical induction for $k$. If $k=1$, then (\ref{eqn213}) follows from (\ref{eqn29}). Suppose that (\ref{eqn213}) for $k$ is true, let us verify it for $k+1$.  Replacing $x$ by $x^m$  in (\ref{eqn213}), we get
\begin{eqnarray}
\|f(x^{m^{k+1}})-m^kf(x^m)\|\leq c_m(1+m+m^2+\cdots +m^{k-1}).
\end{eqnarray}
Now, by (\ref{eqn29}) we get
\begin{eqnarray}
\|m^kf(x^m)-m^{k+1}f(x)\|\leq c_m m^k
\end{eqnarray}
and hence
\begin{eqnarray}
\|f(x^{m^{k+1}}-m^{k+1}f(x)\|\leq c_m (1+m+\cdots +m^k).
\end{eqnarray}
This  implies
\begin{eqnarray}
\left\|\frac{1}{m^{k+1}}f(x^{m^{k+1}})-f(x)\right\|\leq c_m(1+m+\cdots+m^k)\frac{1}{m^{k+1}}\leq c_m.
\end{eqnarray}
This completes the proof of the lemma.
\end{proof}

From (\ref{eqn214} ) it follows that the set
\begin{eqnarray*}
\left\{\frac{1}{m^k}f(x^{m^k})|k\in \mathds{N}\right\}
\end{eqnarray*}
is bounded.

Replacing $x$ by  $x^{m^n}$  in (\ref{eqn214}), we obtain
\begin{eqnarray*}
\left\|\frac{1}{m^k}f(x^{m^{n+k}})-f(x^{m^n})\right\|\leq c_m,
\end{eqnarray*}
\begin{eqnarray*}
\left\|\frac{1}{m^{n+k}}f(x^{m^{n+k}})-\frac{1}{m^n}f(x^{m^n})\right\|\leq \frac{c_m}{m^n}\rightarrow 0, \quad as \quad n\rightarrow \infty.
\end{eqnarray*}

Then, the sequence
\begin{eqnarray*}
\left\{ \frac{1}{m^k}f(x^{m^k})|k\in \mathds{N}\right\}
\end{eqnarray*}
is a Cauchy sequence. Since the space $Y$ is complete, the Cauchy sequence
has a limit and denoted by $\varphi_m(x)$. That is,
\begin{eqnarray*}
\varphi_m(x)=\lim_{k\rightarrow \infty}\frac{1}{m^k}f(x^{m^k}).
\end{eqnarray*}
By (\ref{eqn214}),
\begin{eqnarray*}
\|\varphi_m(x)-f(x)\|\leq c_m, \quad \forall x \in X.
\end{eqnarray*}

\begin{lemma}\label{lem25}
Let $f\in KJ_\rho(X;Y)$ such that
\begin{eqnarray*}
\|f(xy)+f(xy^{-1})-2f(x)\|\leq \|\rho(f(xy)-f(x)-f(y))\|+c
\end{eqnarray*}
for all $x,y\in X$.
Then for any $m\in \mathds{N}$, $\varphi_m\in KJ_\rho(X;Y)$.
\end{lemma}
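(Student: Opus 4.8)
The plan is to show that $\varphi_m$ satisfies the defining inequality (\ref{eqn2.2}) with a suitably enlarged constant, so that $\varphi_m\in KJ_\rho(X;Y)$. The whole argument rests on a single input already available: the uniform estimate $\|\varphi_m(x)-f(x)\|\le c_m$ for every $x\in X$, obtained just before the lemma from (\ref{eqn214}), together with the fact that $f$ itself obeys (\ref{eqn2.2}) with constant $c$. No commutativity of $X$ and no manipulation of the sequence $\frac{1}{m^k}f(x^{m^k})$ is needed; $\varphi_m$ inherits the quasi-Jensen property purely because it is a uniform bounded perturbation of $f$.

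Fix $x,y\in X$. First I would estimate the left-hand side of (\ref{eqn2.2}) for $\varphi_m$ by replacing $\varphi_m$ with $f$ at each of the three points $xy$, $xy^{-1}$, $x$, using the triangle inequality and the uniform bound; this produces
\begin{eqnarray*}
\|\varphi_m(xy)+\varphi_m(xy^{-1})-2\varphi_m(x)\|\le \|f(xy)+f(xy^{-1})-2f(x)\|+4c_m.
\end{eqnarray*}
Then I would apply (\ref{eqn2.2}) for $f$ to bound the first term on the right by $\|\rho(f(xy)-f(x)-f(y))\|+c$. Finally, I would move from $f$ back to $\varphi_m$ inside the $\rho$-term: since $\rho$ acts as a scalar, $\|\rho v\|=|\rho|\,\|v\|$, so a further application of the triangle inequality and the uniform bound at the three points $xy$, $x$, $y$ gives
\begin{eqnarray*}
\|\rho(f(xy)-f(x)-f(y))\|\le \|\rho(\varphi_m(xy)-\varphi_m(x)-\varphi_m(y))\|+3|\rho|c_m.
\end{eqnarray*}
Chaining these three estimates yields (\ref{eqn2.2}) for $\varphi_m$ with the explicit constant $c'=c+(4+3|\rho|)c_m$, which is exactly what membership in $KJ_\rho(X;Y)$ requires.

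There is no genuine obstacle here; the only points demanding care are bookkeeping the triangle inequalities in the correct direction (always bounding the $\varphi_m$-expression from above by the $f$-expression plus an error) and keeping track of the factor $|\rho|$ that appears because $\rho$ multiplies the whole Cauchy-type difference. It is worth noting that the argument in fact proves something slightly more general: any function lying within a uniform distance of a member of $KJ_\rho(X;Y)$ again lies in $KJ_\rho(X;Y)$, with the constant degrading only additively. Since $c_m$ is a fixed finite constant for each $m$, this immediately gives $\varphi_m\in KJ_\rho(X;Y)$ for every $m\in\mathds{N}$.
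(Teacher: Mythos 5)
Your proof is correct, but it follows a genuinely different route from the paper's. You treat $\varphi_m$ purely as a bounded perturbation of $f$: since $\|\varphi_m(x)-f(x)\|\le c_m$ for all $x\in X$, three applications of the triangle inequality transfer the quasi-Jensen inequality from $f$ to $\varphi_m$ at the cost of enlarging the constant to $c+(4+3|\rho|)c_m$, and, as you observe, this actually proves the stronger closure statement that $KJ_\rho(X;Y)$ is stable under uniformly bounded perturbations. The paper instead writes each term $\varphi_m(xy)$, $\varphi_m(xy^{-1})$, $\varphi_m(x)$ as $\lim_{k\rightarrow\infty}\frac{1}{m^k}f\bigl((\cdot)^{m^k}\bigr)$, applies the hypothesis at the points $x^{m^k}$, $y^{m^k}$, and passes to the limit. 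That route, when legitimate, is sharper: the additive constant appears as $c/m^k$ and vanishes in the limit, so $\varphi_m$ would satisfy the inequality with no additive constant at all, which is closer to what the stability theorem in Section 3 ultimately wants. But it tacitly uses the identities $(xy)^{m^k}=x^{m^k}y^{m^k}$ and $(xy^{-1})^{m^k}=x^{m^k}y^{-m^k}$, i.e.\ commutativity of $x$ and $y$ (or a restriction to a special class of groups) that the lemma never states; moreover, as printed, the paper's last line also silently replaces $\|\rho(\cdot)\|$ by $\|\cdot\|$, which is the wrong direction when $|\rho|<1$ (an assumption the paper needs anyway for $c_2$ to make sense). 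Your perturbation argument sidesteps both of these defects entirely and delivers exactly the stated conclusion, membership of $\varphi_m$ in $KJ_\rho(X;Y)$, albeit with a degraded constant rather than the improved one the limit argument would give.
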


 \begin{proof}
 \begin{eqnarray*}
 \begin{split}
 \;&\|\varphi_m(xy)+\varphi_m(xy^{-1})-2\varphi_m(x)\|\\
 \;& =\lim{k\rightarrow \infty}\frac{1}{m^k}\|f((xy)^{m^k})+f((xy^{-1})^{m^k})-2f(x^{m^k})\|\\
 \;& \leq \lim{k\rightarrow \infty}\frac{1}{m^k} \|\rho (f((xy)^{m^k})-f(x^{m^k})-f(y^{m^k}))\|+c\\
 \;& =\|\varphi_m(xy)-\varphi_m(x)-\varphi_m(y)\|+c
   \end{split}
 \end{eqnarray*}for all $x,y\in X$.
 \end{proof}

For all $x\in X$ we get the equation
\begin{eqnarray}
\varphi_m(x^{m^k})=m^k\varphi_m(x).
\end{eqnarray}
In fact,
\begin{eqnarray}
\begin{split}
\varphi_m(x^{m^k})\;&=\lim_{l\rightarrow \infty}\frac{1}{m^l}f((x^{m^k})^{m^l})\\
\;&=\lim_{l\rightarrow \infty}\frac{m^k}{m^{k+l}}f(x^{m^{k+l}})\\
\;& =m^k\lim_{p\rightarrow \infty}\frac{1}{m^p}f(x^{m^p})=m^k\varphi_m(x).
\end{split}
\end{eqnarray}

\begin{lemma}
If $f\in KJ_\rho (X;Y)$, then $\varphi_2=\varphi_m$ for any $m\geq 2$.
\end{lemma}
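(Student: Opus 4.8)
The plan is to upgrade the exact scaling relation $\varphi_m(x^{m^k})=m^k\varphi_m(x)$ (established just before the statement) into \emph{full} homogeneity $\varphi_m(x^n)=n\varphi_m(x)$ for every $n\in\mathds{N}$, and then to run the standard Hyers-type uniqueness argument on the difference $\varphi_2-\varphi_m$. The point is that the two functions $\varphi_2$ and $\varphi_m$ are each within a bounded distance of $f$, so their difference is bounded; if they both scale exactly the same way under $x\mapsto x^{2^k}$, this bound can be divided by $2^k$ and sent to zero.

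First I would record that $\varphi_m(e)=\lim_{k\to\infty}m^{-k}f(e^{m^k})=\lim_{k\to\infty}m^{-k}f(e)=0$, and that $\varphi_m\in KJ_\rho(X;Y)$ by Lemma~\ref{lem25}. Consequently the lemma producing (\ref{eqn29}) applies to $\varphi_m$ in place of $f$, yielding a finite constant $c_n'$ (namely the constant $c_n$ recomputed with $\|\varphi_m(e)\|=0$) such that $\|\varphi_m(x^n)-n\varphi_m(x)\|\le c_n'$ for all $x\in X$ and all $n\ge 2$.

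The key step is amplification. Replacing $x$ by $x^{m^k}$ in this last estimate and using, on the one hand, $\varphi_m(x^{nm^k})=\varphi_m((x^n)^{m^k})=m^k\varphi_m(x^n)$ and, on the other hand, $\varphi_m(x^{m^k})=m^k\varphi_m(x)$, the left-hand side becomes $m^k\|\varphi_m(x^n)-n\varphi_m(x)\|$. Hence
\begin{eqnarray*}
\|\varphi_m(x^n)-n\varphi_m(x)\|\le \frac{c_n'}{m^k}\longrightarrow 0\qquad(k\to\infty),
\end{eqnarray*}
which forces $\varphi_m(x^n)=n\varphi_m(x)$ for all $n\in\mathds{N}$ and all $x\in X$. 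The same reasoning applied with $m=2$ gives $\varphi_2(x^n)=n\varphi_2(x)$; in particular both $\varphi_2$ and $\varphi_m$ are exactly $2^k$-homogeneous. I expect this amplification to be the only real obstacle, since everything afterward is formal.

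Finally, since $\|\varphi_m(x)-f(x)\|\le c_m$ and $\|\varphi_2(x)-f(x)\|\le c_2$ for all $x$, the triangle inequality gives $\|\varphi_2(x)-\varphi_m(x)\|\le c_2+c_m$ for every $x\in X$. Applying this to $x^{2^k}$ and invoking the $2^k$-homogeneity of both functions yields $2^k\|\varphi_2(x)-\varphi_m(x)\|\le c_2+c_m$, whence $\|\varphi_2(x)-\varphi_m(x)\|\le (c_2+c_m)/2^k\to 0$ as $k\to\infty$. Therefore $\varphi_2(x)=\varphi_m(x)$ for all $x\in X$, i.e. $\varphi_2=\varphi_m$ for every $m\ge 2$.
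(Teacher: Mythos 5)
Your proof is correct, but it takes a genuinely different route from the paper's. The paper introduces an auxiliary function $g(x)=\lim_{k\rightarrow\infty}\frac{1}{m^k}\varphi_2(x^{m^k})$ (well defined because $\varphi_2\in KJ_\rho(X;Y)$ by Lemma \ref{lem25}); by construction $g$ inherits both scaling laws $g(x^{2^k})=2^kg(x)$ and $g(x^{m^k})=m^kg(x)$, and it lies at bounded distance from both $\varphi_2$ and $\varphi_m$, so the standard rescaling trick (divide the uniform bound by $2^k$, resp.\ $m^k$, and let $k\rightarrow\infty$) gives $\varphi_2=g=\varphi_m$. You avoid the auxiliary function altogether: you first place $\varphi_m$ in $KJ_\rho(X;Y)$ via Lemma \ref{lem25}, note $\varphi_m(e)=0$, apply the inductive estimate (\ref{eqn29}) to $\varphi_m$ itself to get $\|\varphi_m(x^n)-n\varphi_m(x)\|\le c_n'$, and then use your amplification step (substitute $x^{m^k}$ and invoke the exact relation $\varphi_m(y^{m^k})=m^k\varphi_m(y)$, noting $x^{nm^k}=(x^n)^{m^k}=(x^{m^k})^n$) to kill the constant and obtain full homogeneity $\varphi_m(x^n)=n\varphi_m(x)$ for every $n\in\mathds{N}$; the final comparison of $\varphi_2$ and $\varphi_m$ through their common $2^k$-homogeneity is then the same uniqueness trick the paper runs against $g$. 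Your route is slightly longer but buys something real: it shows that each $\varphi_m$ actually belongs to $PJ_\rho(X;Y)$, which is precisely the assertion that Theorem \ref{thm27} later dismisses as easy to show for $\widehat{f}=\varphi_2$, so your amplification argument doubles as the missing proof of that claim. (Both your argument and the paper's implicitly need $|\rho|<1$ for the constants $c_n$ to be finite, and both inherit the caveat that the proof of Lemma \ref{lem25} silently uses $(xy)^{m^k}=x^{m^k}y^{m^k}$, which holds only in suitable classes of groups; neither issue is specific to your proof.)
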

\begin{proof}
By the definition of $\varphi_2$ and $\varphi_m$, we can see that
$\varphi_2$ and $\varphi_m$ belong to $KJ_\rho(X;Y)$. We can define the function
\begin{eqnarray*}
g(x)=\lim_{k\rightarrow\infty}\frac{1}{m^k}\varphi_2(x^{m^k}).
\end{eqnarray*}
Then $g(x^{m^k})=m^kg(x)$ and $g(x^{2^k})=2^kg(x)$ for all $x\in X$ and $n\in \mathds{N}$. Then there are $d_1,d_2\in \mathds{R}_+$ such that
\begin{eqnarray*}
\|\varphi_2(x)-g(x)\|\leq d_1 \quad and \quad \|\varphi_m(x)-g(x)\|\leq d_2.
\end{eqnarray*}
Thus $\varphi_2=\varphi_m$.
\end{proof}

We denote by $B(X;Y)$ the space of all bounded functions on  group $X$ that take values in $Y$. By $PJ_\rho(X;Y)$ the set of  $(X;Y)$-pseudo-$\rho$-Jensen functions, i.e.,  $(X;Y)$-quasi-$\rho$-Jensen functions $f$ such that $f(x^n)=nf(x)$ for all  $x\in X$ and $n\in \mathds{N}$.

\begin{theorem}\label{thm27}
For an arbitrary group $G$ the following decomposition holds:
\begin{eqnarray}
KJ_\rho(X;Y)=PJ_\rho(X;Y)\oplus B(X;Y).
\end{eqnarray}
\end{theorem}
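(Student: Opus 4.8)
The plan is to identify the pseudo-Jensen part of a given $f \in KJ_\rho(X;Y)$ with the common limit function constructed just before the theorem, and to realize the bounded part as the corresponding difference. First I would fix $f \in KJ_\rho(X;Y)$ and recall that the functions $\varphi_m(x) = \lim_{k\to\infty} \frac{1}{m^k} f(x^{m^k})$ all coincide by the preceding lemma; write $\varphi$ for this common value. Lemma~\ref{lem25} gives $\varphi \in KJ_\rho(X;Y)$, and the estimate $\|\varphi(x) - f(x)\| \leq c_m$ shows that $f - \varphi \in B(X;Y)$.

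The central point is to verify that $\varphi$ is in fact pseudo-$\rho$-Jensen, i.e.\ that $\varphi(x^n) = n\varphi(x)$ holds for every $n \in \mathds{N}$ and not merely for powers of a single base. This is where the equality of all the $\varphi_m$ does the work: for each $m \geq 2$ the homogeneity relation $\varphi_m(x^{m^k}) = m^k \varphi_m(x)$ specializes at $k=1$ to $\varphi_m(x^m) = m\varphi_m(x)$, and since $\varphi_m = \varphi$ this reads $\varphi(x^m) = m\varphi(x)$. As $m \geq 2$ is arbitrary and the case $n=1$ is trivial, the relation $\varphi(x^n) = n\varphi(x)$ holds for all $n$, so $\varphi \in PJ_\rho(X;Y)$. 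Combined with the previous paragraph this yields the decomposition $f = \varphi + (f-\varphi)$ with $\varphi \in PJ_\rho(X;Y)$ and $f - \varphi \in B(X;Y)$, proving $KJ_\rho(X;Y) = PJ_\rho(X;Y) + B(X;Y)$. I would also note in passing that $B(X;Y) \subseteq KJ_\rho(X;Y)$, so that both summands genuinely sit inside $KJ_\rho(X;Y)$: if $\|b(x)\| \leq M$ for all $x$, then the left side of the quasi-Jensen inequality is at most $4M$, while the right side is at least the constant $c$, so $c = 4M$ works.

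It remains to check that the sum is direct, i.e.\ $PJ_\rho(X;Y) \cap B(X;Y) = \{0\}$. Suppose $h$ lies in the intersection and is bounded by $M$. Then from $h(x^n) = nh(x)$ we obtain $n\|h(x)\| = \|h(x^n)\| \leq M$ for every $n \in \mathds{N}$ and every $x$, which forces $\|h(x)\| = 0$; hence $h \equiv 0$. Since both $PJ_\rho(X;Y)$ and $B(X;Y)$ are linear subspaces of the real linear space $KJ_\rho(X;Y)$, this establishes the direct sum decomposition.

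The individual steps are all short, so I expect the main obstacle to be conceptual rather than computational: the delicate passage is from the single-base homogeneity $\varphi_m(x^{m^k}) = m^k\varphi_m(x)$ to full homogeneity $\varphi(x^n) = n\varphi(x)$ for \emph{all} $n$, which rests essentially on the coincidence of the $\varphi_m$ rather than on any one of them alone. The care required is in confirming that the pseudo-Jensen property is genuinely verified for every $n$, and that nothing beyond the already-proved lemmas is invoked.
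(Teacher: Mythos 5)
Your proposal is correct and takes essentially the same route as the paper: the paper also decomposes $f$ as $\varphi_2 + (f-\varphi_2)$, invoking Lemma~\ref{lem25}, the coincidence $\varphi_2=\varphi_m$, and the homogeneity relation $\varphi_m(x^{m^k})=m^k\varphi_m(x)$ to place $\varphi_2$ in $PJ_\rho(X;Y)$ and the difference in $B(X;Y)$. If anything, you supply steps the paper merely asserts, namely the explicit verification that $\varphi(x^n)=n\varphi(x)$ for \emph{every} $n\in\mathds{N}$, the proof that $PJ_\rho(X;Y)\cap B(X;Y)=\{0\}$, and the inclusion $B(X;Y)\subseteq KJ_\rho(X;Y)$.
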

\begin{proof}
It is clear that $PJ_\rho(X;Y)$ and $B(X;Y)$ are subspaces
of $KJ_\rho(X;Y)$, and
$$PJ_\rho(X;Y)\bigcap B(X;Y)=\{0\}.$$
 Now we only
need to prove that $PJ_\rho(X;Y)\oplus B(X;Y)\subseteq KJ_\rho(X;Y)$. In fact,  if $f\in KJ_\rho(X;Y)$, then we can define the function $\widehat{f}:=\lim_{k\rightarrow \infty}
\frac{1}{2^k}f(x^{2^k})$, and  it is easy to show that $\widehat{f}(x)=\varphi_m(x)$.  Hence  $\|\widehat{f}(x)-f(x)\|=\|\varphi_2(x)-f(x)\|\leq c_2$. Thus, we have $\widehat{f}\in PJ_\rho(X;Y)$ and $\widehat{f}-f\in B(X;Y)$.
\end{proof}

\section{Ulam stability}

Suppose that $G$ is a group and $E$ is a  real Banach space.

\begin{df}
We say that the inequality (\ref{eqn21}) is stable for the pair $(X;Y)$ if for any $f:X\rightarrow Y$ satisfying
functional inequality
\begin{eqnarray}
\|f(xy)+f(xy^{-1})-2f(x)\|\leq \|f(xy)-f(x)-f(y)\|+c
\end{eqnarray}
for some $c>0$ there is a solution $j$ of the functional equation (\ref{eqn21}) such that $j(x)-f(x)\in B(X;Y)$.
\end{df}
Next,  the inequality (\ref{eqn21}) is stable on $X$ if and only if $PJ_\rho(X;Y)=J_{\rho_0}(X;Y)$.

\begin{theorem}
 The  inequality (\ref{eqn21}) is stable  if and only if $PJ_\rho(X;Y)=J_{\rho_0}(X;Y)$.
\end{theorem}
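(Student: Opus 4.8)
The plan is to run everything through the direct sum decomposition $KJ_\rho(X;Y)=PJ_\rho(X;Y)\oplus B(X;Y)$ of Theorem \ref{thm27} together with the \emph{pseudo-part} operator $f\mapsto\widehat f$, $\widehat f(x):=\lim_{k\to\infty}\frac{1}{2^k}f(x^{2^k})$. First I would record three elementary properties of this operator on $KJ_\rho(X;Y)$: it is linear (the defining limits exist on all of $KJ_\rho(X;Y)$, which is a real linear space); it annihilates $B(X;Y)$, since $\|\frac{1}{2^k}b(x^{2^k})\|\le\frac{1}{2^k}\sup_x\|b(x)\|\to0$; and it fixes $PJ_\rho(X;Y)$ pointwise, since $g\in PJ_\rho(X;Y)$ satisfies $g(x^{2^k})=2^kg(x)$ and hence $\widehat g=g$. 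These three facts let me convert any relation holding ``modulo a bounded function'' into an exact identity by applying $\widehat{\cdot}$.

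The direction $PJ_\rho(X;Y)=J_{\rho_0}(X;Y)\Rightarrow$ stability is then immediate. Given $f\in KJ_\rho(X;Y)$, decompose $f=\widehat f+b$ with $\widehat f\in PJ_\rho(X;Y)$ and $b\in B(X;Y)$ by Theorem \ref{thm27}. By hypothesis $\widehat f\in J_{\rho_0}(X;Y)\subseteq J_\rho(X;Y)$, so $j:=\widehat f$ is an exact solution of \eqref{eqn21} with $f-j=b\in B(X;Y)$, which is precisely the definition of stability.

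For the converse I would split $PJ_\rho(X;Y)=J_{\rho_0}(X;Y)$ into two inclusions. The inclusion $J_{\rho_0}(X;Y)\subseteq PJ_\rho(X;Y)$ holds unconditionally: an exact solution $j$ with $j(e)=0$ is a quasi-$\rho$-Jensen function, and feeding $c=0$ and $f(e)=0$ into the recursion defining the constants $c_m$ forces every $c_m=0$, so the estimate \eqref{eqn29} yields $j(x^m)=mj(x)$ for all $m$ and hence $j\in PJ_\rho(X;Y)$. For the reverse inclusion I use stability: take $g\in PJ_\rho(X;Y)\subseteq KJ_\rho(X;Y)$ and let $j\in J_\rho(X;Y)$ be the solution with $g-j\in B(X;Y)$ granted by stability. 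Applying $\widehat{\cdot}$ and using linearity together with $\widehat g=g$ and $\widehat{g-j}=0$ gives $g=\widehat j$. Since $\widehat j(e)=\lim_k\frac{1}{2^k}j(e)=0$, it only remains to see that $\widehat j$ itself solves \eqref{eqn21} exactly; granting this, $g=\widehat j\in J_{\rho_0}(X;Y)$, which closes the inclusion and the theorem.

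The main obstacle is exactly that last claim: that the limit $\widehat j=\varphi_2$ of an exact $\rho$-Jensen function is again an exact $\rho$-Jensen function. This is the only point at which the pointwise passage to the limit must respect the nonlinear right-hand side $\|\rho(f(xy)-f(x)-f(y))\|$ of \eqref{eqn21}; I would obtain it by repeating the computation of Lemma \ref{lem25} verbatim with $c=0$, so that the additive constant simply propagates to $0$. As there, this step leans on being able to substitute the powers $x^{2^k},y^{2^k}$ into the inequality and to identify $f((xy)^{2^k})$, i.e. on the same compatibility between powers and products in the group that is used throughout Section 2; everything else in the argument is bookkeeping with the decomposition and with the fixing and annihilation properties of $\widehat{\cdot}$.
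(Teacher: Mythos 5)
Your proposal is correct and rests on the same two pillars as the paper's proof --- the decomposition $KJ_\rho(X;Y)=PJ_\rho(X;Y)\oplus B(X;Y)$ of Theorem \ref{thm27} and the averaging map $f\mapsto\widehat f=\varphi_2$ --- and your forward implication is essentially word-for-word the paper's. Where you genuinely diverge is the converse. The paper argues by contradiction: it takes $f\in PJ_\rho(X;Y)\setminus J_{\rho_0}(X;Y)$, lets stability produce an exact solution $g$ with $h=f-g$ bounded, and concludes $f(x)=g(x)+\frac{1}{n^k}h(x^{n^k})\to g(x)$, which silently uses the homogeneity $g(x^{n^k})=n^kg(x)$ of the exact solution --- i.e.\ it assumes both that $g(e)=0$ and that exact solutions vanishing at $e$ lie in $PJ_\rho(X;Y)$, neither of which the paper establishes. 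Your two-inclusion version patches exactly these gaps: you prove $J_{\rho_0}(X;Y)\subseteq PJ_\rho(X;Y)$ unconditionally by running the recursion for the constants $c_m$ with $c=0$ and $f(e)=0$ (so \eqref{eqn29} becomes the identity $j(x^m)=mj(x)$), and you sidestep the possibility $j(e)\neq 0$ by replacing the stability-provided solution $j$ with $\widehat j$, which vanishes at $e$, equals $g$ by linearity of $\widehat{\cdot\,}$ together with annihilation of $B(X;Y)$ and fixing of $PJ_\rho(X;Y)$, and is exact by rerunning Lemma \ref{lem25} with the additive constant tending to $0$. So the mechanism (divide by powers and let boundedness die in the limit) is the paper's, but your bookkeeping is more careful and actually closes the argument. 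The one caveat you flag is real but shared rather than introduced: both your exactness claim for $\widehat j$ and the paper's Lemma \ref{lem25} (hence also Theorem \ref{thm27} and the paper's forward direction) need to identify $f((xy)^{m^k})$ with $f(x^{m^k}y^{m^k})$, which requires commutativity-type hypotheses on the group that the paper never makes explicit; likewise both arguments tacitly need $|\rho|<1$ for the constants $c_m$ to be defined. Relative to the paper, then, your proof is the same strategy executed more completely, at the cost of the extra (routine) verification that $\widehat{\cdot\,}$ is linear, kills $B(X;Y)$, and fixes $PJ_\rho(X;Y)$.
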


\begin{proof}
Suppose $PJ_\rho(X;Y)=J_{\rho_0}(X;Y)$. Let $f$ satisfy the inequality (\ref{eqn21}) for some $c$. By Lemma \ref{lem25} and Theorem \ref{thm27}, there is a function $\widehat{f}\in PJ_\rho(X;Y)$ such that $f-\widehat{f}$ is a bounded function on $X$.

Now suppose $PJ_\rho(X;Y)\neq J_{\rho_0}(x;Y)$. Then  we will  show that the inequality (\ref{eqn2.2}) is not stable.  Let $f\in PJ_\rho(X;Y)\backslash J_{\rho_0}(x;Y)$, by Theorem \ref{thm27}, there
are $f\in PJ_\rho(X;Y)$ and $h\in B(X;Y)$ such that
$f(x)=g(x)+h(x)$. It follows that for any $n,k\in \mathds{N}$, we get
\begin{eqnarray*}
\begin{split}
\;& f(x^{n^k})=g(x^{n^k})+h(x^{n^k}),\\
\;& n^k f(x)=n^k g(x)+h(x^{n^k}),\\
\;& f(x)=g(x)+\frac{1}{n^k}h(x^{n^k}).
\end{split}
\end{eqnarray*}
Thus, we can see that $f(x)=g(x)$ for all $x\in X$. It is a contradiction to
the assumption  on  $f$. So $PJ_\rho(X;Y)=J_{\rho_0}(X;Y)$.
\end{proof}

\medskip

\section*{Declarations}

\medskip

\noindent \textbf{Availablity of data and materials}\newline
\noindent Not applicable.

\medskip

\noindent \textbf{Conflict of interest}\newline
\noindent The authors declare that they have no competing interests.

\medskip

\noindent \textbf{Fundings}\newline
\noindent
This work was supported by National Natural Science Foundation of China (No.  11761074), Project of Jilin Science and Technology Development for Leading Talent of Science and Technology Innovation in Middle and Young and Team Project(No.20200301053RQ),  and the scientific research project of Guangzhou College of Technology and Business in 2020 (No. KA202032).

\medskip

\noindent \textbf{Acknowledgements}\newline
\noindent  Not applicable.

\medskip

\noindent \textbf{Authors' contributions}\newline
\noindent The authors equally conceived of the study, participated in its
design and coordination, drafted the manuscript, participated in the
sequence alignment, and read and approved the final manuscript.

\medskip

\bibliographystyle{amsplain}

\begin{thebibliography}{99}
\bibitem{AJN} J. Aczel, J. Jung and C. Ng, {\it Symmetric second differences in product form on groups},
in: Topics in Mathematical Analysis, Th. M. Rassias (ed.),  1989, pp. 1--22.

\bibitem{as} L. Aiemsomboon and W. Sintunavarat, {\it Stability of the generalized logarithmic functional equations arising from fixed point theory}, Rev. R. Acad. Cienc. Exactas F\'{i}s. Nat. Ser. A Math. RACSAM {\bf
112} (2018), 229--238.

\bibitem {A} T. Aoki,
\newblock{\it On the stability of the linear transformation in Banach spaces},
 \newblock J. Math. Soc. Japan {\bf 2} (1950), 64--66.


\bibitem{CR}L. C$\breve{a}$dariu and V. Radu, {\it Fixed points and the stability of Jensen's functional
equation,} J. Inequal. Pure  Appl. Math. {\bf 4} (2003), no. 1, Article ID 4.


\bibitem{CPS} Y. Cho, C. Park and R. Saadati, {\it Functional
inequalities in non-Archimedean Banach spaces}, Appl. Math. Lett. {\bf
23} (2010), 1238--1242.

\bibitem{CSY}Y. Cho, R. Saadati and Y. Yang, {\it Approximation of homomorphisms and derivations on Lie $C^*$-algebras via fixed point method},
    J.  Inequal.  Appl. {\bf 2013},  2013:415.

\bibitem{Ch} P. W. Cholewa, {\it Remarks on the stability of functional
equations,} Aequationes Math. {\bf 27} (1984), 76--86.


\bibitem{CENS} J. Chung, B. Ebanks, C. Ng and P. Sahoo, {\it On a quadratic-trigonometric functional
equation and some applications}, Trans. Am. Math. Soc. {\bf 347} (1995), 1131--1161.

\bibitem{c} K. Ciepli\'{n}ski, {\it Applications of fixed point theorems to the Hyers-Ulam stability of functional equations--a survey}, Ann. Funct. Anal. {\bf 3} (2012), 151--164.



\bibitem{Ga} P. G\v{a}vruta, {\it A generalization of the
 Hyers-Ulam-Rassias
stability of approximately additive mappings,} J. Math. Anal. Appl.
{\bf 184} (1994), 431--436.



\bibitem{Hy} D.H. Hyers, {\it On the stability of the linear
 functional
equation,} Proc. Nat. Acad. Sci. USA {\bf 27} (1941), 222--224.

 \bibitem {jpr} S. Jung, D. Popa and  M. Th. Rassias, {\it   On the stability of the linear functional  equation in a single variable on complete metric spaces},
J. Global Optim. {\bf 59} (2014), 13--16.

  \bibitem {ljr} Y. Lee, S. Jung and  M. Th. Rassias, {\it   Uniqueness theorems on functional inequalities concerning cubic-quadratic-additive  equation},
J. Math. Inequal. {\bf 12} (2018), 43--61.


\bibitem{LP} G. Lu and C. Park, {\it Hyers-Ulam stability of additive set-valued functional equations}, Appl. Math. Lett. {\bf 24} (2011), 1312--1316.

\bibitem{LP1} G. Lu and C. Park, {\it Hyers-Ulam stability of general Jensen-type mappings in Banach algebras}, Results  Math. {\bf 66} (2014), 385--404.



\bibitem{N} C. Ng, {\it Jensen's functional equation on groups}, Aequationes Math. {\bf 39} (1999) 85--99.

\bibitem{Park1} C. Park, {\it Fixed points and Hyers-Ulam-Rassias stability of Cauchy-Jensen functional equations in Banach algebras}, Fixed Point Theory  Appl. {\bf 2007}, Article ID 50175 (2007).

\bibitem {ppp} C. Park, {\it Additive $\rho$-functional inequalities and equations}, J. Math. Inequal. {\bf 9} (2015), 17--26.

     \bibitem{PCH} C. Park, Y. Cho and M. Han, {\it Functional inequalities associated with Jordan-von Neumann type additive functional equations}, J. Inequal. Appl. {\bf 2007},
         Article ID 41820 (2007).

    \bibitem{PLZ} C. Park, J. Lee and X. Zhang, {\it Additive $s$-functional inequality and
hom-derivations  in Banach algebras}, J. Fixed Point Theory Appl. {\bf 21}  (2019), 2019:18.


\bibitem {R2} Th. M. Rassias, {\it On the stability of the linear
 mapping in Banach
spaces,} Proc. Am. Math. Soc. {\bf 72} (1978), 297--300.


\bibitem {Ul} S. M. Ulam, {\it Problems in Modern Mathematics,}
Wiley, New York, 1940.

 \bibitem {wa} Z. Wang,
\newblock{\it  Stability of two types of cubic fuzzy set-valued  functional equations},
 \newblock Results Math. {\bf 70} (2016), 1--14.


\end{thebibliography}

\end{document}